\newcommand{\catc}{\mathcal{C}}
\newcommand{\catd}{\mathcal{D}}
\DeclareMathOperator{\ev}{ev}
\DeclareMathOperator{\cof}{cof}
\newcommand{\cN}{\mathcal{N}}
\newcommand{\cM}{\mathcal{M}}
\DeclareMathOperator{\cf}{cf}
\title{Dwyer-Kan Equivalences Induce Equivalences on
	Topologically Enriched Presheaves}
\author%
{%
	Alexander Körschgen%
	\thanks
	{%
		Mathematisches Institut der Universität Bonn,
		Endenicher Allee 60, 53115 Bonn, Germany.\newline
		E-mail address:
			\href{mailto:alex@math.uni-bonn.de}{alex@math.uni-bonn.de}
	}
}
\date{}
\begin{document}%
\maketitle%

\begin{abstract}
This brief note elaborates on a result by Gepner and Henriques. They have
shown that a Dwyer-Kan equivalence between two small, topological categories
gives rise to a Quillen equivalence of the associated categories of
topologically enriched presheaves. We present a more detailed account of
their proof.
\end{abstract}

\pdfbookmark[section]{\contentsname}{toc}
\tableofcontents
\section{Introduction}
\label{sec:intro}
In~\cite[Lemma~A.6]{gepner_homotopy_2007}, Gepner and Henriques show that,
given a Dwyer-Kan equivalence $f\colon \catc \to \catd$ between topologically
enriched index categories, the associated functor $f^\ast\colon
\Pre(\catd,\Top) \to \Pre(\catc,\Top)$ between the respective categories of
topologically enriched presheaves is the right adjoint of a Quillen
equivalence $f_! \dashv f^\ast$. We give a more detailed account of their
proof, discussing the required results on point-set topology as well as the
necessary transfinite techniques in depth.

One might be tempted to apply~\cite[Proposition 2.4]{guillou_enriched_2011} to
deduce the desired result. However, a certain assumption of this proposition
does not hold in our context. We will elaborate on this problem in
Remark~\ref{rem:cubeproblem}.

Section~\ref{sec:basic} introduces basic properties of the category
$\Pre(\catc,\Top)$ of topologically enriched presheaves on a small,
topologically enriched category $\catc$. The following
Section~\ref{sec:induced_eq} recalls the definition of a Dwyer-Kan equivalence
and proceeds by reproducing the proof of Gepner and Henriques, which we
divided into two lemmas and the final Theorem~\ref{thm:dkeqinducesquilleneq}.

By \emph{space}, we mean a compactly generated weak Hausdorff space. The
required point-set foundations are spelled out in the first two subsections of
the appendix while the last subsection of the appendix yields a helpful
characterization of cofibrant objects of $\Pre(\catc,\Top)$.

These notes used to be a part of~\cite{koerschgen_comparison_2016} until the
author decided to split them off.

\section{Basic Properties of \texorpdfstring{$\Pre(\catc,\Top)$}{Pre(C,Top)}}
\label{sec:basic}

\begin{definition}
	Let $\catc$ be a topologically enriched category. 
	\begin{enumerate}
		\item Denote by $\Pre(\catc,\Top)$ the category of enriched functors
			$\catc \to \Top$. In this paper, it will always be equipped with the
			projective model structure.
		\item The category $\Pre(\catc,\Top)$ is bitensored over $\Top$. For $X
			\in \Pre(\catc,\Top), K \in \Top$, and $c \in \catc$, we have
			\[
				(X \otimes K)(c) = X(c) \times K, \quad (X^K)(c) = X(c)^K.
			\]
		\item For every $c \in \catc$, write $\ev_c: \Pre(\catc,\Top)
			\to \Top$ for the functor given by evaluation at $c$.
	\end{enumerate}
\end{definition}

\begin{prop}
	\label{prop:toppresheavesgen}
	Let $\catc$ be a topologically enriched category.
	\begin{enumerate}
		\item For $c \in \catc$, the functor $\ev_c$ has a left adjoint $F_c:
			\Top \to \Pre(\catc,\Top)$, given by $(F_c K)(c^\prime) =
			\catc(c^\prime,c) \times K$. The pair $F_c \dashv \ev_c$ is
			Quillen.
		\item The projective model structure on $\Pre(\catc,\Top)$ is
			cofibrantly generated by the generating cofibrations
			\[
				I_\catc := \{F_c (S^{n-1} \hookrightarrow D^n)\sth
					n \geq 0, c \in \catc\}
			\]
			and the generating trivial cofibrations
			\[
				J_\catc := \{F_c (D^n \hookrightarrow D^n \times [0;1])
					\sth n \geq 0, c \in \catc\}.
			\]
			The domains and codomains of the maps in $I_\catc \cup J_\catc$
			are cofibrant.
		\item $\Pre(\catc,\Top)$ is a topological model category. In
			particular, we have natural homeomorphisms
			\[
				\Pre(\catc,\Top) (X \otimes K,X^\prime) \cong
				\Pre(\catc,\Top)(X,(X^\prime)^K)
			\]
			and the pushout-product axiom holds.
		\item Any cofibration in $\Pre(\catc,\Top)$ is a levelwise closed
			cofibration, i.e., a Hurewicz cofibration with closed image.
	\end{enumerate}
\end{prop}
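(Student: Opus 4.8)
The plan is to reduce everything to the cofibrant generation established in part~(2) together with point-set facts about closed Hurewicz cofibrations. By the standard characterization of cofibrations in a cofibrantly generated model category, every cofibration in $\Pre(\catc,\Top)$ is a retract of a relative $I_\catc$-cell complex, i.e.\ of a transfinite composition of pushouts of coproducts of maps in $I_\catc$. Since being a levelwise closed Hurewicz cofibration is clearly stable under retracts, it suffices to show that this class contains the generating cofibrations and is closed under the colimit constructions used to assemble cell complexes.

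First I would treat the generating cofibrations. Evaluating $F_c(S^{n-1}\hookrightarrow D^n)$ at an object $c'\in\catc$ yields the map $\catc(c',c)\times S^{n-1}\to\catc(c',c)\times D^n$. The inclusion $S^{n-1}\hookrightarrow D^n$ is a closed Hurewicz cofibration, witnessed by an explicit NDR presentation, and taking the product with the fixed space $\catc(c',c)$ preserves both the NDR data and closedness. Hence each map in $I_\catc$ is a levelwise closed Hurewicz cofibration.

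Next I would establish the closure properties. The key observation is that colimits in $\Pre(\catc,\Top)$ are computed levelwise, so that after evaluating at any $c'$ a coproduct, pushout, or transfinite composition of maps of presheaves becomes the corresponding colimit in $\Top$. It therefore remains to verify that the class of closed Hurewicz cofibrations in $\Top$ is closed under coproducts, cobase change, and transfinite composition. These are precisely the point-set facts collected in the first two subsections of the appendix, which I would invoke here, paying attention to the fact that we work in compactly generated weak Hausdorff spaces rather than in all topological spaces, so that the colimits involved are formed as expected and the weak Hausdorff and closedness conditions interact correctly.

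The main obstacle will be the transfinite-composition step. Assembling mutually compatible NDR representations along a possibly long transfinite sequence, and checking that the inclusion of the initial term into the colimit is again \emph{both} a Hurewicz cofibration \emph{and} closed, is the genuinely delicate point; it is exactly where the compactly generated weak Hausdorff hypothesis does its work, since one must control how closed subspaces and the attaching neighborhoods behave under the transfinite filtration and its colimit. Once those point-set results are in hand, the remaining argument is the formal bookkeeping described above.
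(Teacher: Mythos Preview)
Your plan for part~(iv) is correct but takes a genuinely different route from the paper. You build everything up from the generating cofibrations, showing directly that the class of \emph{levelwise closed Hurewicz cofibrations} is closed under coproducts, pushouts, transfinite compositions, and retracts. The paper instead separates the two conclusions. For the Hurewicz-cofibration half it invokes the abstract theory of h-cofibrations (in the sense of Schwede): since every object of $\Pre(\catc,\Top)$ is fibrant, every cofibration is an h-cofibration, and because $\ev_c$ preserves colimits and tensors it sends h-cofibrations to h-cofibrations, which in $\Top$ are exactly Hurewicz cofibrations. Only for the closedness half does the paper argue from the generators as you do, and there it needs merely the closure properties of \emph{closed inclusions}---which is precisely what Lemma~\ref{lem:closedinclusionsclosed} in the appendix supplies.

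The practical difference is exactly the obstacle you single out: your approach requires knowing that Hurewicz cofibrations in $\Top$ are stable under transfinite composition (the NDR-compatibility bookkeeping), a fact that is standard but not contained in the paper's appendix. The paper's h-cofibration argument sidesteps this entirely, at the cost of importing an external reference. Your approach is more elementary and self-contained in spirit, but you would need to supply or cite the transfinite closure of Hurewicz cofibrations separately rather than pointing to the appendix.
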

\begin{proof}
It is an easy exercise to verify that $F_c$ is left adjoint to $\ev_c$. As
$\ev_c$ preserves fibrations and trivial fibrations, the pair $F_c \dashv
\ev_c$ is Quillen, showing (i).

The category of (CGWH) topological spaces satisfies the monoid axiom by
\cite[Lemma 2.3]{hovey_monoidal_1998}. Therefore, we may apply \cite[Theorem
24.4]{shulman_homotopy_2006} and deduce (ii). Part (iii) is a
straight-forward computation.

Part (iv) uses the theory of \emph{h-cofibrations}. see, e.g.,
\cite[Definition A.1.18]{schwede_global_2016}.  Every topological
space is fibrant, so every object of $\Pre(\catc,\Top)$ is fibrant. From
\cite[Corollary A.1.20.(iii)]{schwede_global_2016}, we deduce
that every cofibration in $\Pre(\catc,\Top)$ is a h-cofibration. Picking $c
\in \catc$, the functor $\ev_c: \Pre(\catc,\Top)$ commutes with colimits and
tensors.  Thus, it takes h-cofibrations to cofibrations by part (ii) of the
same Corollary. So, every cofibration is a levelwise h-cofibration of
topological spaces. An inspection of the definitions shows that h-cofibrations
in $\Top$ are exactly the Hurewicz cofibrations.

To conclude the proof of part (iv), it remains to show that a cofibration in
$\Pre(\catc,\Top)$ is levelwise a closed inclusion. This is true for the
generating cofibrations by their description above. The characterization of
cofibrations, see Subsection~\ref{subsec:cofobjects}, together with
Lemma~\ref{lem:closedinclusionsclosed} shows that it is also true for an
arbitrary cofibration.
\end{proof}

\section{Dwyer-Kan Equivalences and Induced Equivalences}

Let us begin by recalling the definition of a Dwyer-Kan equivalence:

\begin{definition}
	Let $\catc$ be topologically enriched category. Then the ordinary
	category $\pi_0 \catc$ has $\ob \pi_0 \catc = \ob \catc$ and
	$(\pi_0 \catc)(c,c^\prime) = \pi_0 (\catc(c,c^\prime))$ with composition
	defined in the obvious way.

	An enriched functor $f: \catc \to \catd$ of topologically enriched
	categories is a \emph{Dwyer-Kan equivalence} if the induced functor $\pi_0
	f: \pi_0 \catc \to \pi_0 \catd$ is an equivalence of categories and $f$ is
	a weak equivalence on all mapping spaces.
\end{definition}

Next, we establish the Quillen pair between topologically enriched presheaf
categories induced by an enriched functor. For a Dwyer-Kan equivalence, this
Quillen pair is a Quillen equivalence as we will have seen at the end of this
section.

\label{sec:induced_eq}
\begin{prop}
	Let $f: \catc \to \catd$ be an enriched functor between topologically
	enriched categories. Then the restriction functor $f^\ast:
	\Pre(\catd,\Top) \to \Pre(\catc,\Top)$ has a left adjoint $f_!$.
	The pair $f_! \dashv f^\ast$ is Quillen. Furthermore, both functors
	respect the tensoring over $\Top$. Finally, both functors preserve
	colimits.
	\label{prop:quillenpairgenprop}
\end{prop}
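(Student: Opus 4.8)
The plan is to construct $f_!$ as the enriched (pointwise) left Kan extension of presheaves along $f$. Concretely, for $X \in \Pre(\catc,\Top)$ I would set $(f_! X)(d) = \int^{c \in \catc} \catd(d,fc) \otimes X(c)$, the coend being formed in $\Top$; since $\Top$ is cocomplete and tensored over itself and $\catc$ is small, this coend exists and is functorial in $d$. The adjunction isomorphism $\Pre(\catd,\Top)(f_! X, Y) \cong \Pre(\catc,\Top)(X, f^\ast Y)$ is then the defining universal property of the pointwise Kan extension, which I would verify by the usual coend manipulation (the hom out of a coend is an end, Fubini, and the copower--power adjunction of $\Top$) together with the enriched co-Yoneda lemma, which collapses the end over $\catd$. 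A convenient bookkeeping check is that $f_!$ sends the free presheaf $F_c K$ to $F_{fc} K$: this is forced, since $\ev_c \circ f^\ast = \ev_{fc}$ and passage to left adjoints is unique. I would deliberately avoid invoking a general adjoint functor theorem here: although $f^\ast$ manifestly preserves all limits (these are computed levelwise), $\Top$ is not locally presentable, so the explicit Kan extension is the safe route. This is the only step that requires genuine enriched-categorical input; everything else is formal.

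Next I would establish that the pair is Quillen by checking that the right adjoint $f^\ast$ preserves fibrations and trivial fibrations. The key observation is that in the projective model structure the fibrations and the weak equivalences are exactly the levelwise ones, detected by the evaluation functors $\ev_d$ (cf.\ Proposition~\ref{prop:toppresheavesgen}). Since $f^\ast$ is precomposition with $f$, one has $\ev_c \circ f^\ast = \ev_{fc}$, so $f^\ast$ carries a (trivial) fibration to a map that is levelwise a (trivial) fibration in $\Top$, hence again a (trivial) fibration. Thus $f^\ast$ is right Quillen and $f_! \dashv f^\ast$ is a Quillen pair. Equivalently, one could verify that $f_!$ takes the generating (trivial) cofibrations $F_c(S^{n-1}\hookrightarrow D^n)$ and $F_c(D^n \hookrightarrow D^n \times [0;1])$ to the corresponding maps $F_{fc}(\cdots)$, using $f_! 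F_c K \cong F_{fc} K$.

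The compatibility with tensoring and the preservation of colimits then follow from the levelwise description. For $f^\ast$, since $\ev_c \circ f^\ast = \ev_{fc}$ and both tensors and colimits in the two presheaf categories are formed levelwise, while $\ev_{fc}$ commutes with $-\times K$ and with colimits, I obtain natural isomorphisms $f^\ast(Y \otimes K) \cong (f^\ast Y)\otimes K$ and $f^\ast(\operatorname{colim} Y_i) \cong \operatorname{colim} f^\ast Y_i$. For $f_!$, preservation of colimits is automatic since it is a left adjoint. For the tensoring I would pull $K$ out of the defining coend, using that $-\times K$ preserves colimits because $\Top$ is cartesian closed: $f_!(X \otimes K)(d) = \int^{c} \catd(d,fc)\otimes(X(c)\times K) \cong \bigl(\int^{c}\catd(d,fc)\otimes X(c)\bigr)\times K = (f_! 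X \otimes K)(d)$. Alternatively, once one observes that $f_! \dashv f^\ast$ is a $\Top$-enriched adjunction, preservation of tensors by the left adjoint (and of cotensors by the right adjoint) is purely formal.

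In summary, the substance of the proof is concentrated entirely in the first step, the existence of the enriched left Kan extension $f_!$, whereas the Quillen property, the compatibility with the tensoring, and the preservation of colimits all reduce to the fact that the projective model structure and the relevant constructions are detected levelwise through the evaluation functors. The only mild subtlety I would keep an eye on is ensuring that the constructed adjunction is genuinely $\Top$-enriched, which is what legitimizes the formal argument for the tensoring.
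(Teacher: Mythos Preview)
Your proposal is correct and follows essentially the same approach as the paper: construct $f_!$ as the enriched left Kan extension (the paper phrases this as Kan extension of $c \mapsto F_{f(c)}(\ast)$ along the Yoneda embedding rather than via your explicit coend formula), verify the Quillen property by observing that $f^\ast$ preserves levelwise (trivial) fibrations, and handle colimits and tensors through the levelwise description. The only cosmetic difference is that for the compatibility of $f_!$ with tensors the paper runs a Yoneda argument through the fact that $f^\ast$ preserves cotensors, whereas you pull $K$ out of the coend directly; both arguments are standard and equally valid.
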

\begin{proof}
	The construction of $f_!$ can be found in \cite[Lemma
	A.6]{gepner_homotopy_2007}. Alternatively, one can use the left Kan
	extension of the functor
	\[
		\begin{array}{ccc}
			\catc & \to & \Pre(\catd,\Top) \\
			c & \mapsto & F_{f(c)}(\ast)
		\end{array}
	\]
	along the enriched Yoneda embedding $F_{-} (\ast): \catc \to
	\Pre(\catc,\Top)$.
	To show that the pair $f_! \dashv f^\ast$ is
	Quillen, it suffices to observe that $f^\ast$ clearly preserves fibrations
	and trivial fibrations.

	The left adjoint $f_!$ preserves colimits, and $f^\ast$ preserves
	colimits, too, because they are computed levelwise.  Moreover, it is
	evident from the definitions that $f^\ast$ preserves the tensoring.  To
	show that $f_!$ preserves the tensoring, observe that the cotensoring is
	also compatible with $f^\ast$.  Let $X \in \Pre(\catc,\Top), Y \in
	\Pre(\catd,\Top)$, and $K \in \Top$. We compute
	\begin{align*}
		& \Pre(\catd,\Top)(f_! (X \otimes K),Y) \\
		\cong & \Pre(\catc,\Top)(X \otimes K,f^\ast Y) \\
		\cong & \Pre(\catc,\Top)(X,(f^\ast Y)^K) \\
		\cong & \Pre(\catc,\Top)(X,f^\ast(Y^K)) \\
		\cong & \Pre(\catd,\Top)(f_! X, Y^K) \\
		\cong & \Pre(\catd,\Top)((f_! X) \otimes K,Y)
	\end{align*}
	naturally. Hence, there is a natural isomorphism $f_! (X \otimes K) \cong
	(f_! X) \otimes K$.

	\end{proof}

The following two lemmas provide the technical details necessary for the proof
of the main theorem at the end of this subsection.

\begin{lemma}
	Let $f: \catc \to \catd$ be as before. If the functor $f$ is homotopically
	fully faithful, then the unit $\eta_X: X \to f^\ast f_! X$ is a level weak
	equivalence in $\Pre(\catc,\Top)$ for all cofibrant $X \in \catc$.
	\label{lem:unitwe}
\end{lemma}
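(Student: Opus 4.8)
The plan is to verify the unit on the free presheaves $F_c K$ and then propagate the statement along the cellular structure of a general cofibrant object, exploiting that both $f_!$ and $f^\ast$ preserve all colimits and the tensoring over $\Top$ (Proposition~\ref{prop:quillenpairgenprop}) and that cofibrations in either presheaf category are levelwise closed Hurewicz cofibrations (Proposition~\ref{prop:toppresheavesgen}(iv)). Throughout, recall that $f$ being homotopically fully faithful means that the structure map $\catc(c,c^\prime) \to \catd(f(c),f(c^\prime))$ is a weak equivalence for all $c,c^\prime$. By the characterization of cofibrant objects in Subsection~\ref{subsec:cofobjects}, every cofibrant $X$ is a retract of an $I_\catc$-cell complex, and since weak equivalences are closed under retracts it suffices to treat cell complexes.

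First I would settle the base case $X = F_c K$. Using the enriched Yoneda isomorphism $\Pre(\catd,\Top)(f_! F_c \ast, Y) \cong (f^\ast Y)(c) = Y(f(c))$ one identifies $f_! F_c \ast \cong F_{f(c)} \ast$, and since tensoring is preserved, $f_! F_c K \cong F_{f(c)} K$. The unit $\eta_{F_c K}$ then has, at an object $c^\prime$, the value $\catc(c^\prime,c) \times K \to \catd(f(c^\prime),f(c)) \times K$, namely the structure map of $f$ crossed with $\id_K$. For $K$ a sphere or a disk this is a weak equivalence, because $f$ is homotopically fully faithful and taking a product with a CW complex preserves weak homotopy equivalences; hence $\eta_{F_c K}$ is a level weak equivalence for the domains and codomains of the generating cofibrations.

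Next I would run the transfinite induction. Write a cell complex as $X = \operatorname{colim}_{\alpha} X_\alpha$, where $X_{\alpha+1}$ is the pushout of $X_\alpha$ along a generating cofibration $F_c(S^{n-1} \hookrightarrow D^n)$. Applying the colimit-preserving functor $f^\ast f_!$ produces a parallel tower, and $\eta$ is a map between the two towers; since $f_!$ is left Quillen and carries the generating cofibration to $F_{f(c)}(S^{n-1} \hookrightarrow D^n)$ while $f^\ast$ merely reindexes, both towers are built from levelwise closed Hurewicz cofibrations. For a successor step I would evaluate at each $c^\prime$ and apply the gluing lemma in $\Top$ to the resulting map of pushout squares: the three corner maps are weak equivalences by the inductive hypothesis and the base case, one leg of each square is a closed Hurewicz cofibration, and therefore the induced map $\eta_{X_{\alpha+1}}$ is a level weak equivalence. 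At a limit ordinal I would use that the transition maps are levelwise closed inclusions, so that by compactness every map from a sphere or a disk into the colimit factors through a bounded stage; this shows that the colimit of the levelwise weak equivalences is again a level weak equivalence.

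The hard part will be the two point-set inputs underlying these induction steps rather than the bookkeeping itself: the levelwise gluing lemma used in the successor case and the compactness argument used in the limit case. Both depend on Proposition~\ref{prop:toppresheavesgen}(iv), which is exactly what lets one transport the whole argument into $\Top$ by guaranteeing that the towers on both sides consist of closed Hurewicz cofibrations. I expect that formulating and invoking these stability properties of weak homotopy equivalences in $\Top$, together with the already-cited Lemma~\ref{lem:closedinclusionsclosed}, will require the most care.
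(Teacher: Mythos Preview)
Your proposal is correct and follows essentially the same approach as the paper: verify the unit is a level weak equivalence on the free presheaves $F_c K$, then propagate along the cell structure using the gluing lemma for pushouts and a compactness argument for transfinite compositions (both inputs resting on Proposition~\ref{prop:toppresheavesgen}(iv)), with retracts handled separately. The paper organizes the same induction through the formal framework of Lemma~\ref{lem:classcontcof}, but the substance of each step is identical to what you outline.
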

\begin{proof}
	Let $\cN$ be the class of all objects of $\Pre(\catc,\Top)$ for
	which the unit is a weak equivalence. Our goal is to show that the
	conditions of Lemma~\ref{lem:classcontcof} are satisfied.
	
	The only map whose codomain is the empty presheaf $\emptyset$ is its
	identity $\id_\emptyset$, which is an isomorphism. We proceed to verify
	(i)-(iii).
	\begin{enumerate}
		\item If $X \in \cN$ and $X^\prime$ is a retract of $X$, then the unit map
			$\eta_{X^\prime}$ is a retract of $\eta_X$ by naturality of the
			unit. A retract of a weak equivalence is a weak equivalence.
			Hence, $\eta_{X^\prime}$ is a weak equivalence.
		\item First, let us show that $\eta_X$ is a weak equivalence whenever
			$X$ is the domain or codomain of a map in $I$. These domains and
			codomains are of the form $F_c (K) = F_c (\ast) \otimes K$ for $c
			\in \catc$ and $K$ a space. The evaluation of $\eta_{F_c (K)}$ at
			some $c^\prime \in \catc$ fits into a diagram

			\[
				\begin{tikzcd}
					(F_c (\ast) \otimes K) (c^\prime) \dar[equals]
						\rar{\eta_{F_c (K)}} &
					(f^\ast f_! (F_c (\ast) \otimes K)) (c^\prime) \dar
						[equals] \\
					\catc (c^\prime,c) \times K 
						\arrow[rddd, "f_{c^\prime,c} \times K"', bend
							right,end anchor=west] &
					(f_! (F_c (\ast) \otimes K)) (f(c^\prime)) \dar{\cong} \\
					& (f_! (F_c (\ast)) \otimes K) (f(c^\prime)) \dar{\cong} \\
					& (F_{f(c)} (\ast) \otimes K) (f(c^\prime)) \dar[equals] \\
					& \catd(f(c^\prime),f(c)) \times K
				\end{tikzcd}
			\]
			using the compatibility of $f_!$ with tensors and the formula $f_!
			(F_c (\ast)) \cong F_{f(c)} (\ast)$ from the proof of
			Proposition~\ref{prop:quillenpairgenprop}. The bent map is a weak
			equivalence because $f_{c^\prime,c}: \catc(c^\prime,c) \to
			\catd(f(c^\prime),f(c))$ is a weak equivalence since $f$ is
			homotopically fully faithful by assumption. Hence, $\eta_{F_c (K)}$
			is a level weak equivalence.

			Given a pushout diagram
			\[
				\pushout{A}{X}{B}{X^\prime}{}{I \ni}{}{}
			\]
			with $X \in \cN$, the square 
			\[
				\pushout{f^\ast f_! A}{f^\ast f_! X}{f^\ast f_! B}%
					{f^\ast f_! X^\prime}{}{}{}{}
			\]
			is pushout by Proposition~\ref{prop:quillenpairgenprop}.
			Evaluating at some $c \in \catc$, we obtain a commutative cube
			\[
				\begin{tikzcd}
					& (f^\ast f_! A) (c) \arrow[rr] \arrow[dd,rightarrowtail]
					&& (f^\ast f_! X) (c) \arrow[dd] \\
					A(c) \arrow[rr, crossing over] \arrow[dd,rightarrowtail]
						\arrow[ur,"\simeq"]
					&& X(c) \arrow[ur,"\simeq"'] \\
					& (f^\ast f_! B)(c) \arrow[rr] && (f^\ast f_! X^\prime)(c) \\
					B(c) \arrow[rr] \arrow[ur,"\simeq"]
					&& X^\prime (c) \arrow[ur,"\Rightarrow \simeq"']
						\arrow[from=uu, crossing over]
				\end{tikzcd}
			\]
			with both the front and rear faces being pushout. The maps
			perpendicular to the plane of projection are the respective unit
			maps evaluated at $c$. Three of these are weak
			equivalences, namely $\eta_X(c)$ by assumption and $\eta_A(c)$
			and $\eta_B(c)$ by our considerations from before.
			
			The map $A \to B$ is a cofibration, so $A(c) \to B(c)$ is a
			Hurewicz cofibration by
			Proposition~\ref{prop:toppresheavesgen}.(iv). The left adjoint
			$f_!$ preserves cofibrations, so $f_! A \to f_! B$ is a
			cofibration and 
			\[
				(f^\ast f_! A)(c) = (f_! A)(f(c)) \to
					(f_! B)(f(c)) = (f^\ast f_! B)(c)
			\]
			is a Hurewicz cofibration. In this situation, the map
			$\eta_{X^\prime}(c)$ between the pushouts is a weak equivalence as
			well (\cite[Proposition 4.8 (b)]{boardman_homotopy_1973} or
			\cite[Proposition 1.1]{malkiewich_homotopy_2014}).  Since $c$ was
			arbitrary, $\eta_{X^\prime}$ is a level weak equivalence.
		\item Assume that there is a $\lambda$-sequence $X: \lambda \to
			\Pre(\catc,\Top)$ such that (a)-(c) from
			Lemma~\ref{lem:classcontcof} hold. A case distinction is in order.
			\begin{description}
				\item[$\lambda$ is $0$]	
					A $0$-sequence is just an empty diagram with colimit the
					initial object $\emptyset$. Also, $f^\ast f_! \emptyset =
					\emptyset$ and the map $\eta_\emptyset$ must be
					$\id_\emptyset$ which is a weak equivalence.
				\item[$\lambda$ a successor ordinal]
					If $\lambda = \mu + 1$, we have $\colim_{\beta < \lambda}
					X_\beta = X_\mu \in \cN$ by assumption.
				\item[$\lambda$ a limit ordinal]
					Some categorical yoga shows that
					$\colim_{\beta < \lambda} \eta_{X_\beta} =
					\eta_{\colim_{\beta < \lambda} X_\beta}$. Therefore, it
					is sufficient to show that $\colim \eta_{X_\beta}$ is a
					weak equivalence.
				
					By condition (b), the maps $X_\beta \to X_{\beta + 1}$ are
					cofibrations, hence levelwise closed inclusions by
					Proposition~\ref{prop:toppresheavesgen}.  The maps $f_!
					X_\beta \to f_!  X_{\beta +1}$ are cofibrations and
					levelwise closed inclusions as well, and we derive that
					$f^\ast f_! X_\beta \to f^\ast f_!  X_{\beta + 1}$ are
					levelwise closed inclusions. Hence, both $\colim
					X_\beta (c)$ and $\colim f^\ast f_!  X_\beta(c)$ are taken
					along closed inclusions, and
					the maps $X_\beta \to f^\ast f_! X_\beta$ are weak
					equivalences because $X_\beta \in \cN$ by assumption (c):
					\[
						\begin{tikzcd}[column sep=large]
							X_\beta (c) \rar{\simeq} \dar[rightarrowtail] &
							(f^\ast f_! X_\beta)(c) \dar[rightarrowtail] \\
							X_{\beta + 1 } (c) \rar{\simeq} \dar[dotted] &
							(f^\ast f_! X_{\beta + 1})(c) \dar[dotted] \\
							\colim X_\beta (c) \rar[swap]{\colim \eta_{X_\beta}} &
							\colim (f^\ast f_! X_\beta) (c)
						\end{tikzcd}
					\]

					By Lemma~\ref{lem:lambdaseqclosedinclusion}, $\pi_k \colim
					X_\beta(c) \cong \colim \pi_k X_\beta (c)$ and $\pi_k
					\colim (f^\ast f_!  X_\beta)(c) \cong \colim \pi_k (f^\ast
					f_! X_\beta)(c)$. Thus, $\pi_k (\colim \eta_{X_\beta})$ is
					a colimit of isomorphisms and therefore an isomorphism
					itself. In conclusion, $\eta_{\colim X_\beta} = \colim
					\eta_{X_\beta}$ is a weak equivalence as desired. Note
					that we did not need condition~(a) in our specific
					situation.
			\end{description}
	\end{enumerate}
\end{proof}

\begin{lemma}
	Let $\catd$ be a topologically enriched category and $Y \in
	\Pre(\catd,\Top)$. If $a: d \to d^\prime$ is a homotopy
	equivalence in $\catd$, then so is $Y(a): Y(d^\prime) \to Y(d)$.
	\label{lem:homeqtohomeq}
\end{lemma}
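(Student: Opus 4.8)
The plan is to construct an explicit homotopy inverse for $Y(a)$ by applying the enriched functor $Y$ to a homotopy inverse of $a$ and then transporting the witnessing homotopies through the mapping-space structure of $Y$.

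First I would unwind the hypothesis. Recall that a homotopy in a topologically enriched category is nothing but a path in the relevant mapping space. Since $a \colon d \to d'$ is a homotopy equivalence, I may choose a morphism $b \colon d' \to d$ together with paths $H \colon [0;1] \to \catd(d,d)$ from $b \circ a$ to $\id_d$ and $H' \colon [0;1] \to \catd(d',d')$ from $a \circ b$ to $\id_{d'}$.

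Next I would invoke the enrichment of $Y$. Being a $\Top$-enriched presheaf, $Y$ supplies for every pair of objects a continuous map $\catd(x,y) \to \Top(Y(y),Y(x))$ sending a morphism $g$ to the induced map $Y(g)$; in particular $Y$ is continuous on the endomorphism spaces $\catd(d,d)$ and $\catd(d',d')$. Composing the path $H$ with the continuous map $\catd(d,d) \to \Top(Y(d),Y(d))$ produces a path from $Y(b \circ a)$ to $Y(\id_d) = \id_{Y(d)}$. By the exponential law for CGWH spaces (the Cartesian closedness recalled in the appendix), such a path is precisely a homotopy $[0;1] \times Y(d) \to Y(d)$. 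As $Y$ is contravariant, $Y(b \circ a) = Y(a) \circ Y(b)$, so this exhibits a homotopy $Y(a) \circ Y(b) \simeq \id_{Y(d)}$. Running the same argument with $H'$ and the identity $Y(a \circ b) = Y(b) \circ Y(a)$ yields $Y(b) \circ Y(a) \simeq \id_{Y(d')}$.

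Consequently $Y(b) \colon Y(d) \to Y(d')$ is a homotopy inverse of $Y(a)$, so $Y(a)$ is a homotopy equivalence, as claimed. The only step demanding genuine care — and the conceptual heart of the argument rather than a real obstacle — is the identification of a path in the mapping space with an honest homotopy of maps, which rests on the internal-hom adjunction for compactly generated weak Hausdorff spaces; the remainder is formal bookkeeping of the enriched functoriality and of the variance of $Y$.
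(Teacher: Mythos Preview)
Your argument is correct and is essentially the paper's own proof: both pick a homotopy inverse $b$, push the witnessing homotopies through the continuous action of $Y$ on morphism spaces, and invoke the identification of paths in $\Top(Y(d),Y(d))$ with homotopies. The paper phrases this at the level of $\pi_0$ (the enriched functor induces $\pi_0(\catd(d,d)) \to \pi_0(\Top(Y(d),Y(d)))$, and maps in the same path component are homotopic), while you work directly with the paths $H$, $H'$ and the exponential law; these are two presentations of the same argument.
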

\begin{proof}
	If $a$ is such a homotopy equivalence, then there is $b: d^\prime \to d
	\in \catd$ such that $[b \circ a] = [\id_d] \in \pi_0 (\catd(d,d))$ and
	$[a \circ b] = [\id_{d^\prime}] \in \pi_0 (\catd(d^\prime,d^\prime))$.

	The
	enriched functor $Y$ induces a map
	\[
		\pi_0 (\catd(d,d)) \to \pi_0 (\Top(Y(d),Y(d))).
	\]
	As $[b \circ a]$ and $[\id_d]$ are the same on the left hand side, we
	obtain that $[Y(\id_d)] = [\id_{Y(d)}]$ and $[Y(b \circ a)] = [Y(a) \circ
	Y(b)]$ agree, too. Two maps represent the same path component in
	$\pi_0(\Top(Y(d),Y(d)))$ if and only if they are homotopic. 
	Therefore, $Y(a) \circ Y(b) \simeq \id_{Y(d)}$. By
	an analogous argument, $Y(b) \circ Y(a) \simeq \id_{Y(d^\prime)}$.
	We conclude that $Y(a)$ is a homotopy equivalence.
\end{proof}

\begin{thm}
	\label{thm:dkeqinducesquilleneq}
	Let $f: \catc \to \catd$ be a Dwyer-Kan equivalence between topologically
	enriched categories. Then the induced Quillen pair $f_! \dashv f^\ast$
	is a Quillen equivalence.
\end{thm}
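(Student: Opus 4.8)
The plan is to verify the two conditions of the standard characterization of Quillen equivalences: that the (derived) unit is a weak equivalence on every cofibrant object, and that the right adjoint $f^\ast$ reflects weak equivalences between fibrant objects. The decisive simplification here is that every topological space is fibrant, so every object of $\Pre(\catd,\Top)$ is fibrant and a fibrant replacement may be taken to be the identity. Consequently the derived unit at a cofibrant $X$ is just the ordinary unit $\eta_X\colon X \to f^\ast f_! X$ of Lemma~\ref{lem:unitwe}, and the reflection condition becomes the requirement that $f^\ast$ reflect all level weak equivalences.

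The first condition is immediate. A Dwyer-Kan equivalence is by definition a weak equivalence on all mapping spaces, hence homotopically fully faithful, so Lemma~\ref{lem:unitwe} applies verbatim and $\eta_X$ is a level weak equivalence for every cofibrant $X$.

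For the reflection condition, I would suppose $g\colon Y \to Y'$ in $\Pre(\catd,\Top)$ is such that $f^\ast g$ is a level weak equivalence. Unwinding the definition of $f^\ast$, this says exactly that $g(f(c))\colon Y(f(c)) \to Y'(f(c))$ is a weak equivalence for every $c \in \catc$, and the task is to promote this to every object of $\catd$. Here I would invoke the remaining part of the Dwyer-Kan hypothesis, namely that $\pi_0 f$ is an equivalence and in particular essentially surjective. Given $d \in \catd$, choose $c \in \catc$ together with a map $a\colon f(c) \to d$ in $\catd$ whose class is an isomorphism in $\pi_0\catd$; such an $a$ is a homotopy equivalence. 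By Lemma~\ref{lem:homeqtohomeq}, both $Y(a)\colon Y(d) \to Y(f(c))$ and $Y'(a)\colon Y'(d) \to Y'(f(c))$ are homotopy equivalences, hence weak equivalences. Naturality of $g$ then yields the commuting square
\[
\begin{tikzcd}
	Y(d) \arrow[r, "g(d)"] \arrow[d, "Y(a)"', "\simeq"] &
	Y'(d) \arrow[d, "Y'(a)", "\simeq"'] \\
	Y(f(c)) \arrow[r, "g(f(c))"', "\simeq"] &
	Y'(f(c))
\end{tikzcd}
\]
in which the two vertical maps and the bottom horizontal map are weak equivalences, so the two-out-of-three property forces $g(d)$ to be a weak equivalence as well. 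Since $d$ was arbitrary, $g$ is a level weak equivalence, and $f^\ast$ reflects weak equivalences.

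Having verified both conditions, the characterization yields that $f_! \dashv f^\ast$ is a Quillen equivalence. I expect the reflection step to be the main obstacle, or at least the conceptual heart of the argument: homotopical full faithfulness alone only controls the levels lying in the image of $f$, and it is precisely essential surjectivity of $\pi_0 f$ — combined with Lemma~\ref{lem:homeqtohomeq}, which converts homotopy equivalences in $\catd$ into weak equivalences of evaluated presheaves — that allows the weak equivalence to be propagated to every object of $\catd$.
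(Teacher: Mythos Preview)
Your proof is correct and uses essentially the same ingredients as the paper: Lemma~\ref{lem:unitwe} for the unit on cofibrant objects, and Lemma~\ref{lem:homeqtohomeq} together with essential surjectivity of $\pi_0 f$ for the reflection step via the very same commutative square and two-out-of-three. The only difference is packaging: the paper verifies the ``$\alpha$ is a weak equivalence iff its adjoint $\beta$ is'' characterization directly, whereas you invoke the equivalent ``derived unit plus reflection'' criterion; since every object is fibrant these unwind to the same argument.
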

\begin{proof}
	Let $X \in \Pre(\catc,\Top)$ be cofibrant, $Y \in \Pre(\catd,\Top)$, and
	$\alpha: X \to f^\ast Y$ be a map with adjoint $\beta: f_! X \to Y$. By
	one of the many equivalent characterizations of a Quillen equivalence, we
	need to show that $\alpha$ is a (level) weak equivalence if and only
	$\beta$ is (note that any $Y$ is fibrant).  
	The map $\alpha$ factors through the unit $\eta_X$, which is a weak
	equivalence by Lemma~\ref{lem:unitwe}:
	\[
		\begin{tikzcd}
			& f^\ast f_! X \drar{f^\ast(\beta)} & \\
			X \arrow[ur,"\simeq"', "\eta_X"] \arrow[rr,"\alpha"'] && f^\ast Y
		\end{tikzcd}
	\]
	Therefore, $\alpha$ is a weak equivalence if and only if $f^\ast (\beta)$
	is. It remains to show that this is the case if and only if $\beta$ is a
	weak equivalence.
	
	Assume that $f^\ast(\beta)$ is a weak equivalence and choose $d \in \catd$.
	As $f$ is homotopically essentially surjective, there is $c \in \catc$ and
	a map $a: fc \to d$ in $\catd$ that is a homotopy equivalence. We obtain a
	commutative diagram
	\[
		\begin{tikzcd}[column sep=huge]
			(f_! X) (fc) = (f^\ast f_! X)(c)
				\arrow[r,"\simeq"',"\beta_{fc} = (f^\ast (\beta))_c"]&
			Y (fc) = (f^\ast Y)(c) \\
			(f_! X) (d) \uar{a^\ast} \rar[swap]{\beta_d}&
			Y (d) \uar[swap]{a^\ast} 
		\end{tikzcd}
	\]
	The vertical arrows are homotopy equivalences by
	Lemma~\ref{lem:homeqtohomeq}, therefore, $\beta_d$ is a weak equivalence
	and $\beta$ is a level weak equivalence.

	Vice versa, let $\beta$ be a level weak equivalence. Obviously, this
	implies that $f^\ast (\beta)$ is a level weak equivalence, concluding the
	proof.
\end{proof}

This concludes the main proof of this note. Let us end this section with a
remark promised in the introduction.

\begin{rem}
	\label{rem:cubeproblem}
	As already mentioned in the introduction, there is an issue preventing us
	from applying~\cite[Proposition 2.4]{guillou_enriched_2011} to deduce the
	previous theorem. Namely, it does not hold in general that cofibrations in
	$\Pre(\catc,\Top)$ are level cofibrations of topological spaces.

	To circumvent this issue, we have to work with Hurewicz cofibrations
	in Lemma~\ref{lem:unitwe}, which still interact nicely with weak equivalences of
	topological spaces under pushouts and transfinite composition. The
	remaining parts of the proof are independent of this issue.

	One of the assumptions of \cite[Proposition 2.4]{guillou_enriched_2011},
	hidden in \cite[Theorem 4.31]{guillou_enriched_2011}, is that the functors
	$\catc(c,c^\prime) \otimes -$ preserve cofibrations. If this is the case,
	then cofibrations in $\Pre(\catc,\Top)$ are indeed level cofibrations, and
	the arguments in Lemma~\ref{lem:unitwe} become much simpler.
	However, it is usually not the case that $\catc(c,c^\prime) \otimes -$
	preserves cofibrations unless the $\catc(c,c^\prime)$ happen to be
	cofibrant themselves.
\end{rem}

\appendix
\section{Appendix}
\subsection{Compactly Generated Weak Hausdorff Spaces}
\label{subsec:cgwhspaces}
The%
\footnote%
{%
	This subsection and the following one have been copied
	from~\cite{koerschgen_comparison_2016} as of version 2.
}
main body of this paper takes place in the category of compactly generated
weak Hausdorff spaces, also referred to CGWH spaces. Before we deal with the
necessary point-set arguments, let us make the used terminology precise.

A space is \emph{compact} if every open cover admits a finite subcover. This
is also being referred to as \emph{quasi-compact} in other sources which
include the Hausdorff property into the definition of compactness.

Moreover, a space $X$ is \emph{compactly generated} if, for any subset $Y \subseteq
X$, $Y$ is closed if and only if $u^{-1} (Y)$ is closed for every compact
Hausdorff $K$ and every continuous $u: K \to X$. The space $X$ is weak
Hausdorff if for every such $u$ and $K$, the image $u(K)$ is closed
in $X$.

These definitions are taken from~\cite{strickland_category_2009}. Note that
this terminology varies within the literature, and some sources refer to
compactly generated spaces as \emph{$k$-spaces} while they take compactly generated
spaces to be compactly generated weak Hausdorff spaces in our sense.

Note that the property of being compactly generated is a local property, i.e.,
a space is compactly generated if and only if each point has a compactly
generated neighborhood. The property of being weak Hausdorff is not local,
though.

In this paper, we refer to CGWH spaces as \emph{(topological) spaces} and denote
the corresponding category by $\Top$. Within the next subsections, we will
have to deal with their point-set subtleties and cite statements about not
necessarily CGWH spaces. To this end, we will use the term \emph{general
topological space} for a space that is not necessarily CGWH.

The category of CGWH spaces is cocomplete. Limits and colimits may, however,
differ from those computed in the category of general topological spaces. Our
convention is that limits and colimits are computed in CGWH unless it is
explicitly declared that the diagram in consideration lives in the category
of general topological spaces. In this case, limits and colimits are to be
taken in the category of general topological spaces. The latter situation does
only occur in Subsection~\ref{subsec:closedincl}.

For the special case of products, we adopt the following notation
from~\cite{strickland_category_2009}: Given two spaces $X$ and $Y$, we denote
by $X \times_0 Y$ the product taken in the category of general topological
spaces, which is not necessarily compactly generated. In contrast, $X \times
Y$ shall denote the product in the category of CGWH spaces.

\subsection{Closed Inclusions and CGWH Colimits}
\label{subsec:closedincl}

We will now shed some light on situations where specific colimits
agree regardless of whether they are computed in CGWH or in the category of
general topological spaces.

\begin{lemma}[{\cite[Section 2.4, p.~59]{hovey_model_1999}}]
	\label{lem:gentopspacessomecolimitsagree}
	The category of topological spaces is cocomplete. In the case of pushouts
	along closed inclusions or transfinite compositions of injections,
	colimits may be computed in the category of general topological spaces
	since they are already weak Hausdorff.
\end{lemma}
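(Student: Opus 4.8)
The plan is to exploit the two adjunctions relating $\Top$ to the category of general topological spaces. Recall that the inclusion of $\Top$ into general topological spaces factors through the intermediate category of $k$-spaces. The $k$-ification functor is right adjoint to the inclusion of $k$-spaces, so the $k$-spaces form a coreflective subcategory and are therefore closed under colimits formed in general topological spaces; in particular, the general colimit of any diagram of CGWH spaces is automatically a $k$-space. On the other hand, weak Hausdorffification is left adjoint to the inclusion of $\Top$ into $k$-spaces, exhibiting $\Top$ as a reflective subcategory. Since general topological spaces are cocomplete, this chain of coreflection and reflection shows that $\Top$ is cocomplete and that a colimit in $\Top$ is computed by forming the colimit in general topological spaces and then applying weak Hausdorffification. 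Consequently the whole lemma reduces to a single assertion: in the two cases at hand, the colimit formed in general topological spaces is \emph{already} weak Hausdorff, so that weak Hausdorffification acts as the identity and the two colimits coincide.

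It remains to verify weak Hausdorffness in each case, which is a purely point-set task. First I would treat a pushout $P$ of CGWH spaces along a closed inclusion $A \hookrightarrow X$. The relevant input is the standard fact that gluing a weak Hausdorff space along a closed inclusion preserves weak Hausdorffness: one checks that the image of every continuous map from a compact Hausdorff space into $P$ is closed, using that both structure maps $X \to P$ and $B \to P$ are themselves closed inclusions and that the preimages in $X \sqcup B$ of a compact image can be controlled along the closed copy of $A$. Concretely I would verify the defining condition by pulling back a prospective compact image along the quotient map $X \sqcup B \to P$ and exploiting the closedness of $A$ in $X$.

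For a transfinite composition I would consider a $\lambda$-sequence whose transition maps are injections of weak Hausdorff (hence $T_1$) spaces; as in the applications, these maps are in fact closed inclusions, which is precisely what the weak Hausdorff conclusion requires. Form the colimit $X_\infty$ in general topological spaces, i.e.\ the union carrying the final topology. The crucial step is to show that every compact subset of $X_\infty$ lies in some stage $X_\beta$: otherwise one extracts points $x_\beta$ at cofinally many stages, and their collection meets each $X_\beta$ in a finite set, hence in a closed set by the $T_1$ property and the closedness of the stages, so the collection is an infinite closed discrete subset of a compact set, a contradiction. Granting this, any continuous map from a compact Hausdorff $K$ into $X_\infty$ factors through a single stage, and its image is closed there and therefore closed in $X_\infty$; thus $X_\infty$ is weak Hausdorff.

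The main obstacle is exactly this point-set bookkeeping, and in particular the transfinite version of the ``compacts live at a finite stage'' principle: the coherence of the final topology must be combined carefully with the $T_1$ and closedness hypotheses to rule out a compact set spread across cofinally many stages. Once these two weak Hausdorffness statements are established, the reflective/coreflective formalism of the first paragraph immediately yields the agreement of the colimits, completing the proof.
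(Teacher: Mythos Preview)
The paper gives no proof of this lemma at all; it simply cites Hovey's book for the statement. Your outline is essentially a reconstruction of the standard argument found there: reduce cocompleteness to the reflective/coreflective picture, then verify weak Hausdorffness directly in the two cases via the usual point-set arguments (closedness of the structure maps into the pushout, and the ``compact subsets live at a finite stage'' principle for the transfinite composition). So your approach agrees with what the paper defers to.

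One remark is worth making explicit. You correctly hedge that the transfinite argument requires the transition maps to be \emph{closed} inclusions, not merely injections as the lemma literally states: the discrete-subset trick needs each stage to sit closedly inside the colimit, and the final step (closed in a stage $\Rightarrow$ closed in the colimit) also needs this. Hovey's treatment on p.~59 indeed works with closed $T_1$ inclusions, and the paper only ever invokes the lemma in that stronger setting (cf.\ the later lemma on $\lambda$-sequences along closed inclusions). So your caveat is not a defect of your argument but a reflection of a slight imprecision in the quoted statement; under the hypotheses actually used in the paper your proof goes through.
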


\begin{lemma}
	\label{lem:closedinclusionsclosed}
	In the category of topological spaces, closed inclusions are closed under
	pushouts, transfinite compositions, and retracts.
\end{lemma}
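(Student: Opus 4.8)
The plan is to treat the three closure properties separately. For pushouts and transfinite compositions I would reduce to the category of general topological spaces via Lemma~\ref{lem:gentopspacessomecolimitsagree}, where the relevant colimits are computed set-theoretically and the classical point-set arguments apply; since the lemma guarantees the result is already CGWH, it coincides with the colimit in $\Top$. Throughout, a closed inclusion means a closed embedding: a homeomorphism onto its image, with the image closed in the target. For \textbf{retracts}, suppose $i'\colon A' \to B'$ is a retract of a closed inclusion $i\colon A \to B$, with sections $s_A, s_B$ and retractions $r_A, r_B$ satisfying $r_A s_A = \id$, $r_B s_B = \id$, $i s_A = s_B i'$, and $r_B i = i' r_A$. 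I would first note that $i'$ is injective, being a retract of a monomorphism. Next, the section $s_A$ is an embedding (its inverse on the image is a restriction of $r_A$), so $s_B i' = i s_A$ is a composite of embeddings and hence an embedding; as $s_B$ is injective and continuous, it follows that $i'$ is an embedding. Finally I would establish the identity $i'(A') = s_B^{-1}(i(A))$ — the inclusion $\subseteq$ uses $i s_A = s_B i'$, and $\supseteq$ uses $r_B i = i' r_A$ with $r_B s_B = \id$ — so that $i'(A')$ is closed, being the preimage of the closed set $i(A)$ under the continuous map $s_B$.

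For \textbf{pushouts}, given a pushout square with $i\colon A \to B$ a closed inclusion and any $g\colon A \to X$, I would compute the pushout $Y$ in general topological spaces by Lemma~\ref{lem:gentopspacessomecolimitsagree}. Viewing $A$ as a closed subset of $B$, the underlying set of $Y$ is $X \sqcup (B \setminus A)$ and the induced map $j\colon X \to Y$ is the evident injection. Using that a set is closed in $Y$ exactly when its preimages in $X$ and in $B$ are closed, I would check that $j(X)$ is closed (its preimage in $B$ is $A$) and, more generally, that $j(C)$ is closed in $Y$ for every closed $C \subseteq X$ (its preimage in $B$ is $g^{-1}(C)$, closed in $A$ and hence in $B$). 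This exhibits $j$ as a closed map that is a homeomorphism onto its image, i.e. a closed inclusion.

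For \textbf{transfinite compositions}, given a $\lambda$-sequence of closed inclusions, I would again pass to general topological spaces and argue by transfinite induction that each map $X_0 \to X_\beta$ is a closed inclusion. At successor stages this follows because a composite of two closed inclusions is a closed inclusion. At a limit stage $X_\mu = \colim_{\beta < \mu} X_\beta$ carries the colimit topology, so the closed subset $X_0 \subseteq X_\beta$ remains closed in $X_\mu$, and for any closed $C \subseteq X_0$ the image of $C$ has closed intersection with every $X_\beta$ and is therefore closed in $X_\mu$; this yields both closedness of the image and the embedding property. Passing to the colimit over all of $\lambda$ gives the claim.

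I expect the main obstacle to be the pushout case, where one must be careful to use the colimit topology correctly when simultaneously verifying that the image of $X$ is closed and that $X \to Y$ is an embedding; it is precisely the reduction to general topological spaces afforded by Lemma~\ref{lem:gentopspacessomecolimitsagree} that legitimizes the explicit set-theoretic description of the pushout on which this verification rests.
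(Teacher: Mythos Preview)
Your proof is correct and follows the same overall strategy as the paper: reduce the pushout and transfinite-composition cases to general topological spaces via Lemma~\ref{lem:gentopspacessomecolimitsagree}, and then verify each closure property by direct point-set arguments. The paper's own proof differs only in that it outsources those point-set verifications to \cite[Lemma~2.4.5 and Corollary~2.4.6]{hovey_model_1999}, after first observing that weak Hausdorff spaces are $T_1$ so that closed inclusions in $\Top$ are closed $T_1$ inclusions in Hovey's sense; your explicit arguments bypass the need for that observation.
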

\begin{proof}
	As weak Hausdorff spaces are automatically $T_1$, a closed inclusion in the
	category of topological spaces is a closed $T_1$ inclusion in the category of
	general topological spaces. Retracts of maps of topological spaces are also
	retracts of maps of general topological spaces. Also, the relevant pushouts and
	transfinite compositions can be computed in the category of general
	topological spaces.

	The claim follows from the proof of \cite[Lemma 2.4.5]{hovey_model_1999}
	for the cases of pushouts and transfinite compositions and from the proof
	of \cite[Corollary 2.4.6]{hovey_model_1999} for the case of retracts.
\end{proof}

Let us end this subsection by noting that sequential colimits along closed
inclusions commute with $\pi_k$.

\begin{lemma}
	\label{lem:lambdaseqclosedinclusion}
	Let $\lambda$ be a limit ordinal and $X: \lambda \to \Top$ be a
	$\lambda$-sequence along closed inclusions. Furthermore, let $K$ be a
	compact space.  Then any map $K \to \colim_{\beta < \lambda} X_\beta$
	factors through some $X_\mu$, $\mu < \lambda$. In particular, the
	canonical map
	\[
		\colim_{\beta < \lambda} \pi_k (X_\beta) \to
		\pi_k (\colim_{\beta < \lambda} X_\beta)
	\]
	is an isomorphism.
\end{lemma}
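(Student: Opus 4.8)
The two assertions are linked: the statement about $\pi_k$ will follow formally from the factorization claim once it is applied to the (compact) spheres, disks, and cylinders, so the genuine content lies in the first part. I would begin by pinning down the colimit concretely. By Lemma~\ref{lem:gentopspacessomecolimitsagree} the colimit $A := \colim_{\beta < \lambda} X_\beta$ along closed inclusions (which are in particular injections) may be computed in the category of general topological spaces and is already weak Hausdorff, hence agrees with the CGWH colimit; by Lemma~\ref{lem:closedinclusionsclosed} each structure map $X_\beta \to A$ is again a closed inclusion. Thus $A$ is the set-theoretic union of the $X_\beta$ carrying the weak topology (a subset is closed if and only if it meets every $X_\beta$ in a closed set), each $X_\beta$ is a closed subspace, and $A$ is $T_1$ because it is weak Hausdorff. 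Since the ordinals are well-ordered, every point $x \in A$ lies in some $X_\beta$ and therefore has a well-defined minimal \emph{rank} $\rho(x) < \lambda$.

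The heart of the proof is the statement that a compact subset of $A$ is contained in a single stage. Given a map $K \to A$, its image $C$ is compact, and I would show that $C \subseteq X_\mu$ for some $\mu < \lambda$ by contradiction. If no such $\mu$ existed, the ranks $\rho(x)$ for $x \in C$ would be unbounded in $\lambda$, so by recursion one could choose points $x_n \in C$ whose ranks $\gamma_n := \rho(x_n)$ are strictly increasing. Put $D := \{x_n : n \in \omega\}$. Because only finitely many $\gamma_n$ lie below any fixed $\beta$, each $X_\beta$ contains only finitely many of the $x_n$; using that $A$ is $T_1$, every subset of $D$ meets each stage in a finite, hence closed, set and is therefore closed in $A$. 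Thus $D$ would be an infinite discrete closed subspace of the compact space $C$, which is absurd. Once $C \subseteq X_\mu$ is established, corestriction along the closed subspace inclusion $X_\mu \hookrightarrow A$ yields the desired continuous factorization $K \to X_\mu \to A$.

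I expect the recursion-and-$T_1$ argument of the previous paragraph to be the main obstacle, precisely because $\lambda$ may have uncountable cofinality: one cannot in general extract a cofinal \emph{sequence} of stages, so a naive sequential-compactness argument fails and the closed-discrete-subset trick, together with the weak Hausdorff property, is genuinely needed. The statement about $\pi_k$ then follows formally. For surjectivity, a based map $S^k \to A$ has compact image and so factors through some $X_\mu$ (enlarged if necessary to contain the basepoint), exhibiting its class as one coming from $\pi_k(X_\mu)$. For injectivity, if a class in $\pi_k(X_\mu)$ becomes null in $\pi_k(A)$, the corresponding nullhomotopy $D^{k+1} \to A$ has compact domain and hence factors through some $X_\nu$ with $\nu \geq \mu$, so the class already vanishes in $\pi_k(X_\nu)$ and thus in $\colim_{\beta < \lambda} \pi_k(X_\beta)$. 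Both arguments use only the compactness of $S^k$ and $D^{k+1}$ together with the factorization established above.
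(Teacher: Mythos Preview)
Your overall strategy is sound, and the deduction of the $\pi_k$ statement from the factorization claim is fine. The core argument, however, has a real gap. The assertion ``only finitely many $\gamma_n$ lie below any fixed $\beta$'' is false in general: if $\lambda$ has uncountable cofinality---precisely the case you flag as delicate---then $\gamma := \sup_n \gamma_n$ is strictly below $\lambda$, and for every $\beta$ with $\gamma \leq \beta < \lambda$ \emph{all} of the $\gamma_n$ lie below $\beta$. For such $\beta$ one has $D \cap X_\beta = D$, which is infinite, and nothing you have written shows it is closed in $X_\beta$; the closed-discrete-subset trick therefore does not go through as stated. In other words, you acknowledge that the $\gamma_n$ cannot be chosen cofinal in $\lambda$, but the very next sentence uses a claim equivalent to their being cofinal.

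The repair uses the one hypothesis you have not invoked: a $\lambda$-sequence is, by definition, continuous at limit ordinals below $\lambda$. Since $\gamma$ is a limit ordinal (as the supremum of a strictly increasing sequence) and $\gamma < \lambda$, one has $X_\gamma = \colim_{\beta < \gamma} X_\beta$, so $X_\gamma$ itself carries the weak topology with respect to the earlier stages. For $\beta < \gamma$ only finitely many $\gamma_n$ satisfy $\gamma_n \leq \beta$, so every subset of $D$ meets $X_\beta$ in a finite (hence closed, by $T_1$) set and is therefore closed in $X_\gamma$; since $X_\gamma$ is a closed subspace of each later $X_\beta$ and of $A$, every subset of $D$ is closed in $A$, and your contradiction is restored. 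The case $\gamma = \lambda$, possible only when $\cf\lambda = \omega$, is the one your text actually covers. For comparison, the paper does not argue directly at all: it observes that closed inclusions of CGWH spaces are closed $T_1$ inclusions of general spaces, that a limit ordinal is $\gamma$-filtered for every finite cardinal $\gamma$, and then invokes \cite[Proposition~2.4.2]{hovey_model_1999}. Your argument, once patched as above, is essentially an unpacking of Hovey's proof.
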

\begin{proof}
	The colimit can be computed in the category of general topological spaces.
	As in the proof of the previous lemma, a closed inclusion is a closed
	$T_1$ inclusion of general topological spaces. Also, note that the
	cofinality $\cf \lambda$ of $\lambda$ is infinite because $\lambda$ is a
	limit ordinal. In particular, $\lambda$ is $\gamma$-filtered for each
	finite cardinal~$\gamma$ in the sense of \cite[Definition
	2.1.2]{hovey_model_1999}. Therefore, \cite[Proposition
	2.4.2]{hovey_model_1999} tells us that the canonical map
	\[
		\colim_{\beta < \lambda} \Top(K,X_\beta) \to
		\Top(K,\colim_{\beta < \lambda} X_\beta)
	\]
	is an isomorphism, proving the first claim. The second claim follows by a
	standard argument.
\end{proof}

\subsection{Cofibrant Objects in Cofibrantly Generated Model Categories}
\label{subsec:cofobjects}

Recall that cofibrations in a cofibrantly generated model category $\cM$ with
generating cofibrations $I$ are precisely the retracts of transfinite
compositions of pushouts of elements of $I$ \cite[Corollary
10.5.22]{hirschhorn_model_2003}. An additional assumption, which is
satisfied by $\Pre(\catc,\Top)$, allows us to simplify the characterization of
cofibrant objects. It shall be noted that this extra assumption does usually
not apply in pointed contexts.

\begin{lemma}
	\label{lem:classcontcof}
	Let $\cM$ be as above and suppose that any map $X \to \emptyset$ is
	an isomorphism. Let $\cN$ be a
	class of objects of $\cM$. Assume that $\cN$ satisfies the following
	properties:
	\begin{enumerate}
		\item $\cN$ is closed under retracts.
		\item If $X \in \cN$ and $X^\prime$ is a pushout of $X$ along a
			generating cofibration,
			\[
				\pushout{A}{X}{B}{X^\prime}{}{I \ni}{}{}
			\]
			then $X^\prime \in \cN$.
		\item If $\lambda$ is an ordinal and $X: \lambda \to \cM$
			is a $\lambda$-sequence such that
			\begin{enumerate}
				\item $X_0 = \emptyset$ if $\lambda > 0$, 
				\item $X_\beta \in \cN$ for $\beta < \lambda$, and
				\item for $\beta < \lambda$ such that $\beta + 1 < \lambda$,
					there is a pushout 
					\[
						\pushout{A_\beta}{X_\beta}{B_\beta}{X_{\beta + 1}}{}{I \ni}{}{}
					\]
			\end{enumerate}
			then $\colim_{\beta < \lambda} X_\beta \in \cN$.
	\end{enumerate}
	Then $\cN$ contains all cofibrant objects. Moreover, the class of
	cofibrant objects is the smallest class satisfying these properties.
\end{lemma}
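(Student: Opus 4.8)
The plan is to build directly on the characterization of cofibrations recalled at the start of this subsection (\cite[Corollary 10.5.22]{hirschhorn_model_2003}): an object $X$ is cofibrant exactly when $\emptyset \to X$ is a retract of a transfinite composition of pushouts of elements of $I$. Accordingly I would organize the argument in three parts. First, show that every object obtained as the colimit of a $\lambda$-sequence $W$ with $W_0 = \emptyset$ whose successive maps are pushouts of generating cofibrations lies in $\cN$. Second, deduce via closure under retracts that every cofibrant object lies in $\cN$. Third, verify that the class of cofibrant objects itself satisfies (i)--(iii), which together with the second part yields the minimality statement.

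For the first part I would proceed by transfinite induction on the stages of $W$. The base case $\emptyset \in \cN$ is precisely condition (iii) applied to the empty sequence $\lambda = 0$, whose colimit is the initial object and whose clauses (a)--(c) hold vacuously. A successor stage $W_{\beta+1}$ is a pushout of $W_\beta \in \cN$ along an element of $I$, hence lies in $\cN$ by (ii); at a limit stage $\gamma$ the object $W_\gamma = \colim_{\delta < \gamma} W_\delta$ lies in $\cN$ by (iii) applied to the restricted sequence $W|_\gamma$, which again begins at $\emptyset$ and whose stages are in $\cN$ by the inductive hypothesis. The colimit of the entire sequence is then either this top stage (when $\lambda$ is a successor) or is handled by one final application of (iii) (when $\lambda$ is a limit).

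The step I expect to be the crux is bridging Hirschhorn's statement with condition (iii). The cited corollary only presents $\emptyset \to X$ as a retract of a \emph{relative} cell complex $g\colon C \to D$ whose domain $C$ is a priori arbitrary, whereas (iii) insists that the sequence start at $\emptyset$. This is exactly where the standing hypothesis enters: the retract exhibits $\emptyset$ as a retract of $C$ and in particular supplies a morphism $C \to \emptyset$, which by assumption is an isomorphism, so $C \cong \emptyset$. Hence $D$ is genuinely built from $\emptyset$ and lies in $\cN$ by the first part, while the codomain half of the retract presents $X$ as a retract of $D$, so $X \in \cN$ by (i). I would emphasize that this reduction from relative to absolute cell complexes is the essential use of the hypothesis, and it is precisely what fails when $\emptyset$ is not strict initial, as in pointed settings.

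For minimality it remains to check that the class of cofibrant objects satisfies (i)--(iii). Closure under retracts is standard; a pushout of a cofibrant object along a generating cofibration is cofibrant since $\emptyset \to X \to X'$ is then a composite of cofibrations; and a colimit as in (iii) is cofibrant since $\emptyset \to \colim_{\beta<\lambda} W_\beta$ is a transfinite composition of cofibrations. As the first two parts show that every class satisfying (i)--(iii) contains all cofibrant objects, and the class of cofibrant objects is itself such a class, it is the smallest one.
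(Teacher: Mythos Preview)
Your proposal is correct and follows essentially the same approach as the paper's proof: both use Hirschhorn's characterization of cofibrations, invoke the strict-initiality hypothesis to force the bottom of the retract diagram to be $\emptyset$, run a transfinite induction to place every stage (and hence the colimit) of the resulting cell complex in $\cN$, and then verify minimality by checking that $\cM^{\cof}$ satisfies (i)--(iii). The only cosmetic difference is that the paper phrases the transfinite step as a minimal-counterexample argument rather than a direct induction.
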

\begin{proof}
	First, note that $\emptyset \in \cN$ because the empty $0$-sequence, whose
	colimit is $\emptyset$, satisfies all the conditions~(iii).(a)-(c).

	Let $X^\prime \in \cM$ be cofibrant, i.e., the map $\emptyset \to
	X^\prime$ is a cofibration. We wish to show that $X^\prime \in \cN$. As
	$\cM$ is cofibrantly generated, this means that there is an ordinal
	$\lambda$ and a $\lambda$-sequence $X: \lambda \to \cM$ such that
	\begin{itemize}
		\item there is a retract diagram
			\[
				\begin{tikzcd}
					\emptyset \rar \dar \arrow[rr,bend left,"\id"] & 
					X_0 \dar \rar & \emptyset \dar \\
					X^\prime \rar \arrow[rr,bend right,"\id"'] &
					\colim_{\beta < \lambda} X_\beta \rar & X^\prime
				\end{tikzcd}
			\]
		\item for $\beta < \lambda$ such that $\beta + 1 < \lambda$, there
			is a pushout 
			\[
				\pushout{A_\beta}{X_\beta}{B_\beta}{X_{\beta + 1}}{}{I \ni}{}{}
			\]
	\end{itemize}
	As the map $X_0 \to \emptyset$ is an isomorphism by assumption, we can
	assume without loss of generality that $X_0 = \emptyset$. Furthermore,
	$\cN$ is closed under retracts, therefore, it suffices to show that 
	$\colim_{\beta < \lambda} X_\beta \in \cN$.

	Conditions (iii).(a) and (iii).(c) hold for the $\lambda$-sequence $X$,
	and we wish to verify the remaining condition~(iii).(b), i.e., $X_\beta
	\in \cN$ for $\beta < \lambda$.  If this is not the case, let $\mu <
	\lambda$ be the smallest ordinal such that $X_\mu \notin \cN$. Since
	$\emptyset \in \cN$ and $X_0 = \emptyset$, we must have $\mu > 0$.  The
	truncated diagram $X_{|\mu}$ is a $\mu$-sequence and satisfies
	(iii).(a)-(c). Hence, $\colim_{\beta < \mu} X_\beta \in \cN$.
	
	If $\mu = \nu+1$ is a successor ordinal, we obtain $\colim_{\beta < \mu}
	X_\beta = X_\nu \in \cN$.  As $X_\mu$ is a pushout of $X_\nu$ along a
	generating cofibration, $X_\mu \in \cN$ by (ii), a contradiction.

	If $\mu$ is a limit ordinal, we immediately obtain $\colim_{\beta < \mu}
	X_\beta = X_\mu$ because $X$ is a $\lambda$-sequence. Therefore, $X_\mu
	\in \cN$, a contradiction as well.  
	In conclusion, $\colim_{\beta < \lambda} X_\beta \in \cN$ by (iii), and we
	have shown that $\cN$ contains all cofibrant objects.

	For the second claim, we need to check that the class $\cM^{\cof}$ of
	cofibrant objects of $\cM$ satisfies (i)-(iii). This is obvious for (i)
	and (ii). If $X$ is a $\lambda$-sequence such that (iii).(a)-(c) hold
	(note that (b) actually follows from (a) and (c) in this case), then the
	map $\emptyset = X_0 \to \colim_{\beta < \lambda} X_\beta$ is a
	cofibration, and the object $\colim_{\beta < \lambda} X_\beta$ is
	cofibrant.
\end{proof}

\bibliographystyle{abbrv}
\bibliography{dk_eq_quillen_eq}

\end{document}